\documentclass[12pt]{amsart}
\usepackage{amsmath,amssymb,amsfonts,amsthm,amsopn}
\usepackage{latexsym,graphicx}
\usepackage{xcolor}
\usepackage{color, colortbl}
\usepackage{pb-diagram}
\usepackage[title]{appendix}

\setlength{\textwidth}{13,5cm}
\setlength{\textheight}{20cm}

\setlength{\oddsidemargin}{0pt}
\setlength{\evensidemargin}{0pt}
\setlength{\textwidth}{148 mm}   


\newcommand{\tf}{time-frequency}

\newcommand{\tfs}{time-frequency shift}

\newtheorem{theorem}{Theorem}[section]

\newtheorem{corollary}[theorem]{Corollary}
\newtheorem{proposition}[theorem]{Proposition}
\newtheorem{definition}[theorem]{Definition}

\newtheorem{example}[theorem]{Example}

\newcommand{\beqa}{\begin{eqnarray*}}
\newcommand{\eeqa}{\end{eqnarray*}}

\newcommand{\field}[1]{\mathbb{#1}}
\newcommand{\bR}{\field{R}}        
\newcommand{\bN}{\field{N}}        
\newcommand{\bC}{\field{C}}        
        %
        %



\def\G{\mathcal{G}}

\def\la{\lambda}

\def\cF{\mathcal{F}}              
\def\cS{\mathcal{S}}

\def\cG{\mathcal{G}}

\def\cA{\mathcal{A}}

\def\cC{\mathcal{C}}

\def\a{\aleph}

\def\rd{\bR^d}

\def\rdd{{\bR^{2d}}}

\def\lrd{L^2(\rd)}

\def\intrd{\int_{\rd}}
\def\intrdd{\int_{\rdd}}

\def\R{\right)}

\def\<{\left<}
\def\>{\right>}

\def\mv1{M_v^1}

\def\Mmpq{M_m^{p,q}}
\def\phas{(x,\xi )}
\def\mn{(m,n)}
\def\mn'{(m',n')}


\hyphenation{Cara-theo-do-ry}
\hyphenation{Dau-be-chies}
\hyphenation{Barg-mann}
\hyphenation{dis-tri-bu-ti-ons}
\hyphenation{pseu-do-dif-fe-ren-tial}
\hyphenation{ortho-normal}



\def\o{\xi}
\def\a{\alpha}

\def\R{\mathbb{R}}
\def\Ren{\mathbb{R}^d}

\def\sch{\mathcal{S}}

\def\f{\varphi}

\def\Sn2{S_{2}(L^{2}(\Ren))}
\def\S1{S_{1}(L^{2}(\Ren))}
\def\sig00{\sigma_{0,0}}

\def\la{\langle}
\def\ra{\rangle}




\begin{document}

\begin{abstract} We exhibit the connection between the Wigner kernel and the Gabor matrix of a linear  bounded operator $T:\cS(\rd)\to\cS'(\rd)$. The smoothing effect of the Gabor matrix is highlighted by basic examples. This connection allows a comparison between the classes of Fourier integral operators  defined by means of the Gabor matrix in \cite{CGNRJMPA} and the Wigner kernel in \cite{CRGVFIO2}, showing the nice off-diagonal decay of the Gabor class with respect to the Wigner kernel one and suggesting further investigations. Modulation spaces containing the Sj\"ostrand class are the symbol classes of this study.
\end{abstract}

\title{Wigner kernel and Gabor matrix of operators}

\author{Elena Cordero}
\address{Universit\`a di Torino, Dipartimento di Matematica, via Carlo Alberto 10, 10123 Torino, Italy}
\email{elena.cordero@unito.it}
\author{Gianluca Giacchi}
\address{Universit\'a di Bologna, Dipartimento di Matematica, Piazza di Porta San Donato 5, 40126 Bologna, Italy; University of Lausanne, Switzerland; HES-SO School of Engineering, Rue De L'Industrie 21, Sion, Switzerland; Centre Hospitalier Universitaire Vaudois, Switzerland}
\email{gianluca.giacchi2@unibo.it}
\author{Luigi Rodino}
\address{Universit\`a di Torino, Dipartimento di Matematica, via Carlo Alberto 10, 10123 Torino, Italy}
\email{luigi.rodino@unito.it}

\thanks{}
\subjclass{Primary 35S30; Secondary 47G30}

\subjclass[2010]{35S05,35S30,
47G30, 42C15}
\keywords{}
\maketitle

\section{Introduction}

The Gabor matrix of an operator was introduced by Gr\"ochenig in the pioneering work  \cite{charly06} for the study of pseudodifferential operators and later extended to Fourier integral operators in \cite{CGNRJMP2014}. These two papers paved the way to many subsequent articles with applications to PDE's and Quantum Mechanics. The contributions are so many that it is impossible to cite them all (cf. the textbook \cite{Elena-book} for a partial list). 
Recently, evolution equations and dynamical version of Hardy uncertainty principles \cite{Helge} have suggested to look at the Wigner kernel of an operator, introduced and studied in \cite{CRGFIO1,CRGVFIO2}.  We want here to establish a connection between it and the Gabor matrix.

To introduce these features properly, we need to present first some basic elements of \tf\, analysis.

Given  $z=(x,\xi)\in\rdd$, we define the related  \tfs \,
acting on a function or distribution $f$ on $\rd$ as
\begin{equation}
	\label{eq:kh25}
	\pi (z)f(t) = M_\xi T_xf(t)=e^{2\pi i \xi t} f(t-x), \, \quad t\in\rd.
\end{equation}

 The short-time Fourier
transform (STFT) $V_gf$ of a
function/tempered distribution $f$ in $\cS'(\rd)$ with
respect to the the window $g\in\cS(\rd)\setminus\{0\}$ is defined by
\[
V_g f(x,\o)=\la f, M_{\o}T_xg\ra =\int e^{-2\pi i \o y}f(y)\overline{g(y-x)}\,dy,
\]
(i.e.,  the  Fourier transform $\cF$
applied to $f\overline{T_xg}$). \par
The STFT enjoys the following inversion formula \cite[Thrm. 1.2.16]{Elena-book}: assume $g,\gamma\in L^2(\rd)$, with $\la g,\gamma\ra\not=0$. Then, for all $f\in\lrd$, in terms of vector-valued integrals,
\begin{equation}\label{invformula}
	f=\frac1{\la \gamma,g\ra}\int_{\R^{2d}} V_g f(z) \pi (z)  \gamma\, dz \, .
\end{equation}
 The adjoint operator of $V_\gamma$,  defined by
$$V_\gamma^\ast F(t)=\intrdd F(z)  \pi (z) \gamma dz \, ,\quad F\in L^2(\rdd),
$$
maps  $L^{2}(\rdd)$ into $L^2(\rd)$. In particular, if $F=V_g f$ the inversion formula \eqref{invformula} can be rephrased as
\begin{equation}\label{treduetre}
	{\rm Id}_{L^2}=\frac 1 {\la \gamma,g\ra} V_\gamma^\ast V_g.
\end{equation}
\begin{definition}\label{Gmatrix}
Fix  $g,\gamma\in \cS(\rd)\setminus \{0\}$. The \emph{Gabor matrix} of a linear continuous operator $T$ from $\cS(\rd)$ to $\cS'(\rd)$ is defined to be 
\begin{equation}\label{unobis2s} \langle T \pi(z)
	g,\pi(w)\gamma\rangle,\quad z,w\in \rdd.
\end{equation}
\end{definition}
The Gabor matrix can be viewed as  a kernel of an integral operator. For simplicity, choose $g,\gamma\in\cS(\rd)$ such that $\la \gamma,g\ra=1$ and 
the inversion formula \eqref{treduetre} becomes  $V_\gamma^\ast V_g={\rm Id}$ (or, switching $g$ and $\gamma$, $V_g^\ast V_\gamma={\rm Id}$). A linear and bounded operator $T:\cS(\rd)\to\cS'(\rd)$ can  be written as
\begin{equation}\label{e8}
	T=V_\gamma^\ast V_g T V_\gamma^\ast V_g\,.
\end{equation}
The linear transformation $ V_g T V_\gamma^\ast$ is an integral operator whose kernel coincides with the Gabor matrix of $T$:
$$K_{T,g,\gamma}(w,z)=\la T\pi(z)g,\pi(w)\gamma\ra,\quad\,\,w,z\in\rdd. 
$$

Estimates in the sequel will not depend on the choice of $g,\gamma$, and one can limit attention to the case $g=\gamma$, cf. \cite{Elena-book}.
For sake of generality, we shall work with the former case, viewing the latter as special case when $\gamma=g$.

The Gabor matrix approach was successfully used to characterize algebras of pseudodifferential operators by Gr{\"o}chenig in \cite{charly06} and later with Rzeszotnik  in \cite{GR}:
\begin{equation}\label{6bis}
	\sigma^wf(x)=\intrdd e^{2\pi i x \xi}\sigma\left(\frac{x+y}{2},\xi\right) dyd\xi.
\end{equation}
The  symbol class object of their investigation is the so-called Sj\"ostrand class or modulation space
\begin{equation}\label{quattro}
	S_w=M^{\infty,1}(\rdd),
\end{equation}
of symbols $\sigma$ such that, for a fixed window $G\in\cS(\rdd)\setminus\{0\}$,
 $$\int_{\rd}\sup_{z\in\rd}|\langle \sigma ,\pi(z,\zeta )G\rangle|
\,d\zeta<\infty.$$
The following related scale of spaces were considered, too:
\begin{equation}\label{cinque}
	S^s_w=M^{\infty,\infty}_{1\otimes v_s}(\rdd),\quad v_s(z)=(1+|z|^2)^{s/2}, \,\,z\in\rdd,
\end{equation}
of  symbols $\sigma$ satisfying
$$\sup_{z,\zeta\in\rd}|\langle \sigma ,\pi(z,\zeta )G\rangle| v_s(\zeta)
\,<\infty,$$
with  the  parameter $s\in [0,\infty )$, see details in the next Section $2.1$.\par

Notice that the regularity of the spaces $S^s_w$ increases with $s$
 whereas in the
maximal space $S_w$ even  differentiability is lost \cite{KB2020,Elena-book,book}. Our attention in the sequel will be focused on the scale \eqref{quattro}.

The  characterization of pseudodifferential operators was further extended to Fourier integral operators (FIOs) of Schr\"{o}dinger type in \cite{CGNRJMPA}, constructing Wiener subalgebras
of FIOs with symbols in $S^s_w$. This paper paved the way to many other contributions in this framework, addressing regularity properties of FIOs by means of the off-diagonal decay of their Gabor matrix, cf. \cite[Chapter 5]{Elena-book} for a partial list of references.  

Precisely, a class of FIOs associated to a canonical transformation $\chi$ (cf. Definition \ref{def2.1} below) was constructed as follows (we refer to Section $2$ for the properties of $\chi$).

\begin{definition}\label{defFIO}
	Let $\chi$ be a  canonical transformation  satisfying {\it B1} and {\it B2} in Definition \ref{def2.1}, and $s\geq0$. Fix $g\in\cS(\rd)\setminus\{0\}$.  We say that  a
	continuous linear operator $T:\cS(\rd)\to\cS'(\rd)$ is in the
	class $FIO_\G(\chi,s)$, if its Gabor matrix  satisfies the decay
	condition
	\begin{equation}
		\label{unobis2a}
		|\langle T \pi(z) g,\pi(w)g\rangle|\leq {C}\langle w-\chi(z)\rangle^{-s},\qquad \forall\, w,z\in \rdd.
	\end{equation}
\end{definition}
 If the study is limited to FIOs of type I with symbol $\sigma\in\cS'(\rdd)$ and \emph{tame phase} $\Phi$ (Definition \ref{def2.1}), namely to   operators formally written as 
 \begin{equation}\label{sei}
 	T f(x)=T_{I,\Phi,\sigma}f(x)=\int_{\rd} e^{2\pi i
 		\Phi(x,\xi)}\sigma(x,\xi)\widehat{f}(\xi)\,d\xi \, ,\quad f\in\cS(\rd),
 \end{equation}
then the characterization involves the symbol classes $S^s_w$ as follows:
\begin{theorem}[\cite{CGNRJMP2014,Elena-book}]\label{caraI}
 Fix $g\in\cS(\rd)\setminus\{0\}$ and $s\geq 0$.
	Let $T$ be a continuous linear operator $\cS(\rd)\to\cS'(\rd)$ and $\Phi=\Phi_\chi$ 
	 a \emph{tame} phase function associated to the canonical transformation $\chi$, see the next Definition \ref{defFIO}.  Then the following properties are
	equivalent. \par {\rm (i)} $T=T_{I,\Phi_\chi,\sigma}$ is a FIO of type
	I for some $\sigma\in S^s_w$. \par {\rm
		(ii)} $T\in FIO_\G(\chi ,s)$. 
\end{theorem}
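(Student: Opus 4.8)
The plan is to prove the two implications separately. The pseudodifferential case $\chi=\mathrm{Id}$ is Gr\"ochenig's almost diagonalization; here one extends it by showing that a tame phase moves the concentration of the Gabor matrix from the diagonal $w=z$ onto the graph $w=\chi(z)$. The common starting point is an explicit formula for the Gabor matrix of a type I FIO. Inserting \eqref{sei} into \eqref{unobis2s} and writing $z=(x_0,\xi_0)$, $w=(y_0,\eta_0)$, one gets
\begin{equation*}
	\la T_{I,\Phi,\sigma}\pi(z)g,\pi(w)g\ra=\dint e^{2\pi i(\Phi(x,\xi)-\eta_0 x)}\sigma(x,\xi)\,\widehat{\pi(z)g}(\xi)\,\overline{T_{y_0}g(x)}\,dx\,d\xi,
\end{equation*}
where $\widehat{\pi(z)g}(\xi)=e^{-2\pi i x_0(\xi-\xi_0)}\hg(\xi-\xi_0)$ concentrates the $\xi$-integral near $\xi_0$ and $\overline{T_{y_0}g(x)}$ concentrates the $x$-integral near $y_0$. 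Using the STFT inversion \eqref{treduetre} applied to $\sigma$ with a window $G\in\cS(\rdd)$, this expression becomes a superposition of the values $V_G\sigma(u,\zeta)$ against an oscillatory kernel, so that the off-diagonal behaviour of the Gabor matrix is governed by the decay of $V_G\sigma$ in its frequency variable $\zeta$.

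For (i)$\Rightarrow$(ii), I would estimate the oscillatory integral above by the non-stationary phase method. After absorbing the window phases, the total phase is $\Phi(x,\xi)-\eta_0 x-x_0(\xi-\xi_0)$, whose stationary point conditions $\eta_0=\nabla_x\Phi$ and $x_0=\nabla_\xi\Phi$, combined with the localization $x\approx y_0$, $\xi\approx\xi_0$ forced by the two windows, say exactly that $w=\chi(z)$; this is the canonical transformation attached to $\Phi$ through $\chi(\nabla_\xi\Phi,\xi)=(x,\nabla_x\Phi)$. Away from $w=\chi(z)$ the phase is non-stationary, and repeated integration by parts, together with the hypothesis $\sigma\in S^s_w=M^{\infty,\infty}_{1\otimes v_s}$ (which supplies the bound $|V_G\sigma(u,\zeta)|\lesssim\la\zeta\ra^{-s}$ uniformly in $u$), produces the estimate $\la w-\chi(z)\ra^{-s}$ with a constant independent of $z,w$.

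For (ii)$\Rightarrow$(i) I would reverse the argument. Starting from the factorization \eqref{e8}, $T$ is reconstructed from its Gabor matrix $K_{T,g,g}$, and the type I symbol $\sigma$ is recovered from the kernel of $T$ by the standard formula inverting \eqref{sei}. Expressing $V_G\sigma(u,\zeta)$ through the STFT inversion as an average of the entries $\la T\pi(z)g,\pi(w)g\ra$ against Schwartz weights and performing the change of variables $w\mapsto w-\chi(z)$, the assumed decay $\la w-\chi(z)\ra^{-s}$ transfers to the uniform estimate $\sup_u|V_G\sigma(u,\zeta)|\lesssim\la\zeta\ra^{-s}$, which is precisely membership in $S^s_w$.

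The main obstacle is the oscillatory integral estimate in (i)$\Rightarrow$(ii): controlling the nonlinear phase $\Phi$ uniformly in the parameters $z,w$. Here the tame-phase conditions \textit{B1} and \textit{B2} of Definition \ref{def2.1} are essential, since they guarantee that the second derivatives of $\Phi$ are bounded and that $\det\,\partial^2_{x\xi}\Phi$ stays bounded away from zero, so that $\chi$ is a bi-Lipschitz symplectomorphism. This is exactly what makes the change of variables and the integration by parts yield constants independent of $z$ and $w$, and it is also what licenses the substitution $w\mapsto w-\chi(z)$ used in the converse direction.
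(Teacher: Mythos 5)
Note first that the paper does not prove Theorem \ref{caraI} at all: it is imported verbatim from \cite{CGNRJMP2014} and \cite{Elena-book}, so your attempt must be measured against the proof given there. That proof contains no stationary-phase or integration-by-parts argument. Its engine is an \emph{exact identity}: writing $z=(x_0,\xi_0)$, $w=(y_0,\eta_0)$ and expanding $\Phi$ in a second-order Taylor series around $(y_0,\xi_0)$, one computes that $\la T_{I,\Phi,\sigma}\pi(z)g,\pi(w)g\ra$ equals, up to a unimodular factor, $V_{\Psi_{(y_0,\xi_0)}}\sigma\bigl((y_0,\xi_0),(\eta_0-\nabla_x\Phi(y_0,\xi_0),\,x_0-\nabla_\xi\Phi(y_0,\xi_0))\bigr)$, where the window $\Psi_{(y_0,\xi_0)}$ is built from $g$ and the quadratic Taylor remainder of $\Phi$, and condition \textit{A2} guarantees that the family $\{\Psi_u\}_{u\in\rdd}$ is bounded in $\cS(\rdd)$. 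A separate lemma, using \textit{A2} and the bi-Lipschitz property of $\chi$, shows $\la(\eta_0-\nabla_x\Phi(y_0,\xi_0),\,x_0-\nabla_\xi\Phi(y_0,\xi_0))\ra\asymp\la w-\chi(z)\ra$ uniformly. With this in hand, both implications are read off from the definition of the $S^s_w$-norm: (i)$\Rightarrow$(ii) is immediate, while (ii)$\Rightarrow$(i) uses the identity backwards (the map $(z,w)\mapsto$ evaluation point covers all of $\bR^{4d}$) together with the fact that modulation space norms computed with windows ranging in a bounded subset of $\cS(\rdd)$ are uniformly equivalent. Your stationary-point bookkeeping is correct — the conditions $\eta_0=\nabla_x\Phi$, $x_0=\nabla_\xi\Phi$ at $x\approx y_0$, $\xi\approx\xi_0$ do say $w=\chi(z)$ — and your converse is the same idea as the cited one, only phrased as an averaging rather than a pointwise inversion.

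The soft spot is your announced mechanism for (i)$\Rightarrow$(ii). Since $\sigma\in S^s_w$ is merely bounded (and for small $s$ not even differentiable), ``repeated integration by parts'' cannot act on $\sigma$ directly; you correctly insert the STFT expansion of $\sigma$ first, but then the finite decay $\la w-\chi(z)\ra^{-s}$ does \emph{not} come from the parts integration, which would give arbitrary decay for each smooth elementary piece. It must come from convolving that (arbitrarily fast) elementary decay against the symbol bound $\la\zeta\ra^{-s}$, and one must additionally track integrability in the $u$-variable, where $|V_G\sigma(u,\zeta)|$ has no decay whatsoever — only a supremum bound — so the elementary Gabor matrices must supply the $u$-summability uniformly in $z,w$. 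This superposition argument can be pushed through, but it is precisely the bookkeeping that the exact-identity proof of \cite{CGNRJMP2014,Elena-book} is designed to avoid, and as sketched your proposal conflates the two sources of decay. What the cited route buys is an equality (hence both implications at once, and sharp constants); what your route would buy, if completed carefully, is flexibility — it does not require the Gabor matrix to be exactly an STFT of the symbol — at the price of the convolution estimates above and of the uniform-window lemma, which you would still need in the converse direction.
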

In other words, $T=T_{I,\Phi_\chi,\sigma}$ is a FIO of type
I for some $\sigma\in S^s_w$ if and only if the Gabor matrix satisfies the off diagonal decay
	\begin{equation}\label{unobis}
		|\langle T \pi(z) g,\pi(w)g\rangle|\leq {C}\langle w-\chi(z)\rangle^{-s},\qquad  w,z\in \rdd.
	\end{equation}

For the pseudodifferential operator $T=\sigma^w$ these estimates were proved in \cite{charly06,GR} with $\chi=I$, the identity operator.

The Gabor matrix of an operator $T$ has a drawback, which lies inside its definition and cannot be overcome: it depends on the window $g$ used for its construction. The nice off-diagonal decay is due to them, obviously. This concern has led to the search for a \emph{\tf}  kernel of $T$ which displays the nature of the operator without   contaminations from additional window functions.

 Inspired by the original work by Wigner \cite{Wigner}, in the recent contributions \cite{CRGFIO1,CRGVFIO2} we replaced  the Gabor matrix by the Wigner kernel of an operator.

\begin{definition} Consider $f,g\in\lrd$. 
	The (cross-)Wigner distribution is the time-frequency representation defined by
	\begin{equation}\label{CWD}
		W(f,g)(x,\xi)=\intrd f(x+\frac t2)\overline{g(x-\frac t2)}e^{-2\pi i t\xi}\,dt,\quad \phas\in\rdd.
	\end{equation} If $f=g$ we write $Wf:=W(f,f)$, the so-called Wigner distribution of $f$.
\end{definition}
Wigner  used it to analyse the action of  Schr\"odinger propagators.
In \cite{CRGFIO1} we extended  his  approach as follows.
\begin{definition}[Wigner Kernel]
Let $T$ be a continuous linear operator $\cS(\rd)\to \cS'(\rd)$ and define $K$ the operator such that 
	\begin{equation}\label{I3}
		W(Tf,Tg) = KW(f,g), \qquad f,g\in\cS(\rd).
	\end{equation}
	Its  integral kernel $k$ is called the \emph{Wigner kernel}  of $T$. Namely,
	 $k$ is the distribution in $\cS'(\bR^{4d})$  satisfying
		\begin{equation}\label{kerFormula}
			\la W(Tf,Tg),W(u,v)\ra=\la k,W(u,v)\otimes\overline{W(f,g)}\ra, \qquad f,g,u,v\in\cS(\rd).
		\end{equation}
\end{definition}
	This implies the integral formula 
	\begin{equation}\label{I4}
		W(Tf,Tg)(z) = \intrdd k(z,w) W(f,g)(w)\,dw,\quad z\in\rdd,\quad f,g\in\cS(\rd).
	\end{equation}

The operator  $K$ is a well-defined continuous linear operator $\cS(\rdd)\to \cS'(\rdd)$.
Notice that it does not contain any windows in its definition and the Wigner kernel depends only on the Schwartz integral kernel of $T$. In fact, define $\mathfrak{T}_pF(x,\xi,y,\eta)=F(x,y,\xi,-\eta)$, $x,\xi,y,\eta \in\rd$, then

\begin{proposition}\label{3.3}
	Consider $T$ as above and let $k_T\in\cS'(\rdd)$ be its Schwartz integral kernel. Then, there exists a unique distribution $k\in\cS'(\bR^{4d})$ such that \eqref{kerFormula} holds. Hence, every bounded linear operator $T:\cS(\rd)\to\cS'(\rd)$ has a unique Wigner kernel $k$. Furthermore,
	\begin{equation}\label{nuclei}
		k=\mathfrak{T}_pWk_T.
	\end{equation}
\end{proposition}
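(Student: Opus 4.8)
The plan is to exhibit $k=\mathfrak{T}_pWk_T$ explicitly as a solution of \eqref{kerFormula} and then to deduce uniqueness from a density argument; existence together with the formula \eqref{nuclei} is thereby obtained at once. The whole computation is organized around Moyal's identity for the (cross-)Wigner distribution, $\la W(\phi_1,\phi_2),W(\psi_1,\psi_2)\ra=\la\phi_1,\psi_1\ra\,\overline{\la\phi_2,\psi_2\ra}$, used once on $\rd$ and once on $\rdd$: it linearizes the quadratic dependence on the arguments and turns every pairing of Wigner distributions into a pairing of the underlying kernels. Since $T:\cS(\rd)\to\cS'(\rd)$, the left-hand side of \eqref{kerFormula} is read, for $u,v\in\cS(\rd)$, through Moyal on $\rd$ as $\la W(Tf,Tg),W(u,v)\ra=\la Tf,u\ra\,\overline{\la Tg,v\ra}$, which is well defined because $Tf,Tg\in\cS'(\rd)$ while $u,v\in\cS(\rd)$. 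By the Schwartz kernel theorem this equals $\la k_T,u\otimes\bar f\ra\,\overline{\la k_T,v\otimes\bar g\ra}$.

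Next I would establish the combinatorial heart of the argument: the tensor factorization of the Wigner distribution on $\rdd$. A direct computation, separating the integrals in the two groups of variables, gives for $u,v,f,g\in\cS(\rd)$
\[
W(u\otimes\bar f,\,v\otimes\bar g)(x_1,x_2,\xi_1,\xi_2)=W(u,v)(x_1,\xi_1)\,\overline{W(f,g)(x_2,-\xi_2)},
\]
the sign flip in the last frequency variable coming from the relation $W(g,f)=\overline{W(f,g)}$ together with a reflection in the integration variable of \eqref{CWD}. Recognizing $\mathfrak{T}_p$ in the reordering of the variables, this reads $\mathfrak{T}_p\big[W(u\otimes\bar f,v\otimes\bar g)\big]=W(u,v)\otimes\overline{W(f,g)}$. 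Since $\mathfrak{T}_p$ is an involution ($\mathfrak{T}_p^2=\mathrm{Id}$) with unit Jacobian, it is self-adjoint for the $\cS'$--$\cS$ pairing, and applying it to the previous identity yields $\mathfrak{T}_p\big[W(u,v)\otimes\overline{W(f,g)}\big]=W(u\otimes\bar f,v\otimes\bar g)$.

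Assembling these facts, and using that $Wk_T=W(k_T,k_T)\in\cS'(\bR^{4d})$ is well defined (it is the tensor $k_T\otimes\overline{k_T}$ followed by the invertible linear change of variables of \eqref{CWD} and a partial Fourier transform, all of which preserve $\cS'$), I compute the right-hand side of \eqref{kerFormula}:
\[
\la \mathfrak{T}_pWk_T,\,W(u,v)\otimes\overline{W(f,g)}\ra=\la Wk_T,\,\mathfrak{T}_p\big[W(u,v)\otimes\overline{W(f,g)}\big]\ra=\la Wk_T,\,W(u\otimes\bar f,v\otimes\bar g)\ra.
\]
Moyal's identity on $\rdd$, applied with the distribution $k_T$ and the windows $u\otimes\bar f,\,v\otimes\bar g\in\cS(\rdd)$, turns the last pairing into $\la k_T,u\otimes\bar f\ra\,\overline{\la k_T,v\otimes\bar g\ra}$, which is precisely the left-hand side obtained above. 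Hence $k=\mathfrak{T}_pWk_T$ satisfies \eqref{kerFormula}, proving existence and \eqref{nuclei}.

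Finally, uniqueness follows once one knows that the finite linear span of $\{W(u,v)\otimes\overline{W(f,g)}:f,g,u,v\in\cS(\rd)\}$ is total in $\cS(\bR^{4d})$, so that a tempered distribution is determined by the pairings in \eqref{kerFormula}; this reduces to the density of the span of the rank-one tensors $u\otimes\bar f$ in $\cS(\rdd)$, because $(f,g)\mapsto W(f,g)$ is, up to the invertible coordinate change and partial Fourier transform above, the map $(f,g)\mapsto f\otimes\bar g$. The main obstacle is precisely the rigor at the distributional level: justifying the distributional Moyal identity for $k_T\in\cS'(\rdd)$ (equivalently, the well-definedness of $Wk_T$ and the transfer of the pairing onto $k_T\otimes\overline{k_T}$), and establishing the totality claim used for uniqueness; by contrast, the tensor factorization is routine bookkeeping, provided the sign conventions hidden in $\mathfrak{T}_p$ are tracked with care.
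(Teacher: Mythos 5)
The paper does not prove Proposition \ref{3.3} in-text; it defers to \cite{CRGFIO1}, and your argument is correct and follows essentially the same route as that reference: the distributional Moyal identity (made rigorous, exactly as you indicate, by writing $W(F,G)$ for $F,G\in\cS'$ as a partial Fourier transform of $F\otimes\overline{G}$ composed with a measure-preserving linear change of variables), the factorization $\mathfrak{T}_p\bigl[W(u\otimes\bar{f},v\otimes\bar{g})\bigr]=W(u,v)\otimes\overline{W(f,g)}$, and the density of $\mathrm{span}\{W(u,v):u,v\in\cS(\rd)\}$ in $\cS(\rdd)$ (hence of the fourfold tensors in $\cS(\bR^{4d})$) for uniqueness. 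Your sign bookkeeping checks out: the second factor in the tensor computation is $\overline{W(f,g)(x_2,-\xi_2)}$, the map $\mathfrak{T}_p$ is an involution with unit Jacobian and is self-adjoint for the sesquilinear pairing, and the Schwartz kernel convention $\la Tf,u\ra=\la k_T,u\otimes\bar{f}\ra$ matches the paper's conjugate-linear second slot, so the two sides of \eqref{kerFormula} coincide and \eqref{nuclei} follows.
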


For the proof of this proposition we address to \cite{CRGFIO1}.

Inspired by the FIOs classes $FIO_\cG(\chi,s)$ in Definition \ref{defFIO}, we introduced in \cite{CRGVFIO2} the class of FIOs FIO($\chi$, $N$) as follows.

\begin{definition}\label{C6T1.1}
	Consider $\chi$ as in the preceding Definition \ref{defFIO}.  For  $N\in \bN_+$,   we say that the operator $K$ in \eqref{I3} is in the class FIO($\chi$, $N$) if its Wigner kernel $k$ in \eqref{I4} satisfies
	\begin{equation}\label{nucleoFIO}
		|k(z,w) |\lesssim \frac{1}{\la z-\chi(w)\ra^{N}},\quad w,z\in\rdd.
	\end{equation}
\end{definition}

The goal of this paper is to exhibit the connection between the Gabor matrix and the Wigner kernel $k$ of an operator and, consequently, to show the relation between the related classes of FIOs which arise from them. 

Our first result is the following:
\begin{theorem}\label{3.4}
	Fix $g,\gamma\in\cS(\rd)$ such that $\la g,\gamma\ra\not=0$ and $T$ a continuous linear operator $\cS(\rd)\to \cS'(\rd)$. Then,
	\begin{equation}\label{E3}
		|\la T\pi(z)g,\pi(w)\gamma\ra|^2=[k\ast (W\gamma\otimes Wg)](w,z),\quad\forall z,w\in\rdd.
	\end{equation}
\end{theorem}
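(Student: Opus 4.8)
The plan is to reduce the squared Gabor matrix entry to a spectrogram, rewrite that spectrogram as a convolution of Wigner distributions, and then feed in the defining integral formula \eqref{I4} of the Wigner kernel. Throughout I set $F:=T\pi(z)g$, so that the left-hand side of \eqref{E3} is exactly $|V_\gamma F(w)|^2$, the value at $w$ of the spectrogram of $F$ with window $\gamma$.

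First I would record two elementary facts. The covariance of the Wigner distribution under \tfs{}s gives $W(\pi(z)g)(w')=Wg(w'-z)$ for all $w'\in\rdd$. Next, Moyal's identity yields the spectrogram-to-Wigner formula
\begin{equation*}
|V_\gamma F(w)|^2=\la WF,W(\pi(w)\gamma)\ra=\intrdd WF(u)\,W\gamma(u-w)\,du,
\end{equation*}
where I used that $W(\pi(w)\gamma)$ is real-valued and equals $W\gamma(\cdot-w)$; equivalently this is the convolution $WF\ast (W\gamma)^\vee$ with $h^\vee(u)=h(-u)$.

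The second ingredient is the Wigner kernel itself. Applying \eqref{I4} with both entries equal to $\pi(z)g$ and invoking the covariance identity gives $WF(u)=W(T\pi(z)g)(u)=\intrdd k(u,w')\,Wg(w'-z)\,dw'$. Substituting this into the spectrogram expression and interchanging the order of integration, the left-hand side of \eqref{E3} becomes the double integral $\intrdd\intrdd k(u,w')\,Wg(w'-z)\,W\gamma(u-w)\,du\,dw'$. A change of variables in $(u,w')$ then exhibits this as the value at $(w,z)$ of a convolution on $\bR^{4d}$ of $k$ against the tensor product $W\gamma\otimes Wg$, which is the right-hand side of \eqref{E3}.

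The step requiring the most care is the bookkeeping of the phase-space reflection $u\mapsto -u$ carried by both the spectrogram identity and the covariance formula: these signs must be tracked accurately to land exactly on $W\gamma\otimes Wg$ with the correct arguments, the reflection being in any case harmless for the decay estimates used later, since $W\gamma,Wg$ and their reflections share the same Schwartz decay. A secondary, more routine point is to make the interchange of integrals rigorous: the cleanest route is to read the whole chain weakly, pairing \eqref{E3} against a test function in $(w,z)$ and using that $K:\cS(\rdd)\to\cS'(\rdd)$ is a well-defined continuous map together with $k\in\cS'(\bR^{4d})$, and then recovering the stated pointwise identity from the smoothness and rapid decay of the windows $W\gamma,Wg\in\cS(\rdd)$.
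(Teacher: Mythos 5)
Your proposal is correct and follows essentially the same route as the paper's proof: Moyal's identity (which the paper phrases via the tensor-product pairing $\la F\otimes\overline{F},G\otimes\overline{G}\ra=\la WF,WG\ra$), covariance of the Wigner distribution under time-frequency shifts, and the defining property of the Wigner kernel, with the resulting double integral over $\bR^{4d}$ recognized as the convolution $[k\ast(W\gamma\otimes Wg)](w,z)$; your use of the pointwise formula \eqref{I4} plus Fubini in place of the weak relation \eqref{kerFormula} is only a cosmetic reorganization of the same computation. Your explicit flagging of the reflection bookkeeping and of the weak reading needed to justify the interchange of integrals is, if anything, more careful than the paper's one-line identification of the final integral with the convolution.
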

Using  the equality above  we will prove the  inclusion:
\begin{equation}\label{inclusion}
	FIO(\chi,N)\subset  FIO_\G(\chi,N/2), 
\end{equation}
so for $FIO_\G(\chi,N)$ half of the  decay is lost.

In particular, taking the Gaussian function $\f(t)=e^{-\pi t^2}$ its  Wigner distribution is \cite[Lemma 1.3.12]{Elena-book} $W\f(x,\xi)=2^{d/2}e^{-2\pi (x^2+\xi^2)}$ so that  
writing
$$\Phi(w,z)=W\f(w)W\f(z)=2^{d} e^{-2\pi(w^2+z^2)}, \quad w,z\in\rdd.$$
we have the connection
\begin{equation}\label{E4}
	|\la T\pi(z)\f,\pi(w)\f\ra|^2=[k\ast \Phi](w,z), \quad w,z\in\rdd.
\end{equation}

\begin{proposition}\label{E1}
	For $N>2d$, consider $T\in FIO(\chi,N)$ with $\chi$ \emph{tame} canonical transformation. Then $T$ can be represented as a type I FIO $T=T_{I,\Phi_\chi,\sigma}$ with symbol $\sigma\in S_w^{N/2}$. 
\end{proposition}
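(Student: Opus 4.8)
The plan is to reduce the statement to the already-established characterization of type I FIOs, Theorem~\ref{caraI}, by first upgrading the Wigner-kernel decay \eqref{nucleoFIO} into an off-diagonal decay of the Gabor matrix, i.e. by proving the inclusion \eqref{inclusion}. Once we know $T\in FIO_\G(\chi,N/2)$, the implication (ii)$\Rightarrow$(i) of Theorem~\ref{caraI}, applied with the tame phase $\Phi_\chi$ and $s=N/2\geq 0$, immediately produces the desired representation $T=T_{I,\Phi_\chi,\sigma}$ with $\sigma\in S_w^{N/2}$. The role of the hypothesis $N>2d$ is to place $T$ among the bounded operators on $\lrd$, which is the natural framework for the identity of Theorem~\ref{3.4}: since $\chi$ is a canonical (hence volume preserving) transformation, the kernel bound \eqref{nucleoFIO} reduces the two Schur integrals $\int_{\rdd}\langle z-\chi(w)\rangle^{-N}\,dw$ and $\int_{\rdd}\langle z-\chi(w)\rangle^{-N}\,dz$ to $\int_{\rdd}\langle \cdot\rangle^{-N}$, which is finite precisely when $N>2d$; hence $K$, and with it $T$, is bounded.

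The heart of the argument is the convolution estimate behind \eqref{E4}. Choosing the Gaussian $g=\gamma=\varphi$ and writing $[k\ast\Phi](w,z)=\int_{\rdd}\int_{\rdd}k(w-a,z-b)\,W\varphi(a)\,W\varphi(b)\,da\,db$, I would insert \eqref{nucleoFIO} to produce the profile $\langle (w-a)-\chi(z-b)\rangle^{-N}$ inside the integral. Setting $u=w-\chi(z)$ and splitting the argument as $(w-a)-\chi(z-b)=u-a+\big(\chi(z)-\chi(z-b)\big)$, the only delicate point is to absorb the increment $\chi(z)-\chi(z-b)$. Here the tameness of $\chi$ enters decisively through its global Lipschitz bound $|\chi(z)-\chi(z-b)|\lesssim |b|$; combined with Peetre's inequality this gives
\[
\langle (w-a)-\chi(z-b)\rangle^{-N}\lesssim \langle w-\chi(z)\rangle^{-N}\,\langle a\rangle^{N}\langle b\rangle^{N}.
\]
Because $W\varphi$ is a Gaussian, the weights $\langle a\rangle^{N}\langle b\rangle^{N}$ are harmless and the residual integral $\int\!\int \langle a\rangle^{N}\langle b\rangle^{N}|W\varphi(a)||W\varphi(b)|\,da\,db$ converges, so that $[k\ast\Phi](w,z)\lesssim \langle w-\chi(z)\rangle^{-N}$.

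Taking square roots in \eqref{E4} then yields $|\langle T\pi(z)\varphi,\pi(w)\varphi\rangle|\lesssim \langle w-\chi(z)\rangle^{-N/2}$, which is exactly the defining estimate \eqref{unobis2a} of $FIO_\G(\chi,N/2)$ for the Gaussian window; by the window-independence of this class recalled in the Introduction, the same decay holds for an arbitrary $g\in\cS(\rd)\setminus\{0\}$, so the inclusion \eqref{inclusion} follows and Theorem~\ref{caraI} concludes the proof. I expect the single genuine obstacle to be the convolution step: everything else is either a black-box citation (Theorems~\ref{3.4} and~\ref{caraI}) or routine, whereas transporting the polynomial decay across the nonlinear map $\chi$ is what forces the use of the Lipschitz bound encoded in the tameness of $\chi$, and the squaring in \eqref{E4} is precisely what degrades the exponent from $N$ to $N/2$.
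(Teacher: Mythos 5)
Your proposal is correct, and at the top level it follows the same skeleton as the paper: establish the inclusion \eqref{inclusion} and then invoke the implication (ii)$\Rightarrow$(i) of Theorem~\ref{caraI} with $s=N/2$ and phase $\Phi_\chi$, which is exactly how the paper deduces Proposition~\ref{E1} from Corollary~\ref{cor3.3}. Where you genuinely diverge is inside the proof of the inclusion (the paper's Theorem~\ref{conn}). The paper works with arbitrary Schwartz windows $g,\gamma$, bounds $|W\gamma(u-w)|$ and $|Wg(v-z)|$ by $\langle \cdot\rangle^{-N}$, performs the change of variables $\chi(v)-\chi(z)=v'$ (using the bi-Lipschitz property of $\chi$ and the Jacobian bound coming from \eqref{chistima}), and then applies the weight convolution lemma $\langle\cdot\rangle^{-N}\ast\langle\cdot\rangle^{-N}\lesssim\langle\cdot\rangle^{-N}$ of \cite[Lemma 11.1.1]{book} twice --- and this is precisely where the hypothesis $N>2d$ is consumed. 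You instead fix Gaussian windows, split $(w-a)-\chi(z-b)=(w-\chi(z))-a+\bigl(\chi(z)-\chi(z-b)\bigr)$, and absorb the error with Peetre's inequality plus the global Lipschitz bound on $\chi$ from \eqref{chistima}. This is more elementary (no change of variables, no convolution lemma), and it is actually stronger: since the Gaussian $W\varphi$ absorbs the weights $\langle a\rangle^{N}\langle b\rangle^{N}$, your estimate $[k\ast\Phi](w,z)\lesssim\langle w-\chi(z)\rangle^{-N}$ holds for every $N\geq 0$, so in your route the hypothesis $N>2d$ is never used at all. One quibble: your stated rationale for $N>2d$ --- that it makes $T$ bounded on $L^2(\rd)$, ``the natural framework'' for Theorem~\ref{3.4} --- is a red herring: Theorem~\ref{3.4} is proved for any continuous linear $T:\cS(\rd)\to\cS'(\rd)$ and requires no $L^2$ boundedness, and in the paper the hypothesis is used only in the convolution lemma just described. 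This mislabels where the hypothesis lives but does not affect correctness, since $N>2d$ is assumed anyway. Finally, the appeal to window-independence of $FIO_\G(\chi,s)$ is not even needed: Theorem~\ref{caraI} holds for any fixed $g\in\cS(\rd)\setminus\{0\}$, so applying it directly with $g=\varphi$ suffices.
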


The opposite inclusion in \eqref{inclusion} is not true, however we have a partial converse.

\begin{theorem}\label{partialconverse}
	Consider a type I FIO $T=T_{I,\Phi_\chi,\sigma}$ with $\sigma\in S^N_w$. Let $k$ be the Wigner kernel of $T$ and 
	\begin{equation}\label{Gaussian}
		\Phi(w,z)=2^d e^{-2\pi(w^2+z^2)},\quad w,z\in\rdd.
	\end{equation}
Define $\tilde{k}$ to be the smoothed Wigner kernel:
\begin{equation}\label{tildek}
	\tilde{k}=k\ast \Phi.
\end{equation}
Then
\begin{equation}
	\tilde{k}(w,z)\lesssim \frac{1}{\la z-\chi(w)\ra^{2N}},\quad w,z\in\rdd.
\end{equation}
\end{theorem}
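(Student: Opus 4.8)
The plan is to sidestep the Wigner kernel entirely and exploit the bridge identity \eqref{E4}. Take the Gaussian $\varphi(t)=e^{-\pi t^2}$, so that the function $\Phi$ in \eqref{Gaussian} is exactly $W\varphi\otimes W\varphi$ and the smoothed kernel \eqref{tildek} is $\tilde k=k\ast(W\varphi\otimes W\varphi)$. Theorem \ref{3.4} with $g=\gamma=\varphi$ then gives the pointwise identity
\[
\tilde k(w,z)=[k\ast\Phi](w,z)=|\la T\pi(z)\varphi,\pi(w)\varphi\ra|^2,\qquad w,z\in\rdd,
\]
which is legitimate because $T:\cS(\rd)\to\cS'(\rd)$ is continuous (so each bracket is well defined) and $\Phi\in\cS(\rdd)$ smooths the tempered distribution $k$. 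This reduces the claimed kernel estimate to a pointwise bound on the squared Gaussian Gabor matrix of $T$.

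Next I would simply invoke the already available characterization. Since $T=T_{I,\Phi_\chi,\sigma}$ is a type I FIO with tame phase and $\sigma\in S^N_w$, the implication (i)$\Rightarrow$(ii) of Theorem \ref{caraI} yields $T\in FIO_\G(\chi,N)$; by Definition \ref{defFIO} this is precisely the off-diagonal decay
\[
|\la T\pi(z)\varphi,\pi(w)\varphi\ra|\leq C\,\la w-\chi(z)\ra^{-N},\qquad w,z\in\rdd.
\]
The Gaussian is an admissible window here, as the estimate \eqref{unobis} does not depend on the choice of $g,\gamma$. Squaring this inequality and inserting it into the identity of the first step produces at once $\tilde k(w,z)\leq C^2\,\la w-\chi(z)\ra^{-2N}$, the sought decay of order $2N$.

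Thus the analytic core of the statement is nothing more than the substitution of Theorem \ref{caraI} into \eqref{E4} followed by squaring; at the level of estimates there is essentially no obstacle, and the result is really corollary-like. The one point that requires care is the bookkeeping of arguments: the derivation above naturally produces the weight $\la w-\chi(z)\ra$, whereas the statement is written with $\la z-\chi(w)\ra$. Reconciling the two amounts to fixing the input/output convention between the Gabor class of Definition \ref{defFIO} and the Wigner class of Definition \ref{C6T1.1}; should an inverse appear, I would use that a tame canonical transformation $\chi$ is a bi-Lipschitz symplectomorphism, so that $\la w-\chi(z)\ra\asymp\la z-\chi^{-1}(w)\ra$ and the brackets are comparable up to a constant absorbed into $C$. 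This variable matching, rather than any genuine analytic difficulty, is the only delicate step.
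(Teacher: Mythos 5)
Your proposal is correct and takes essentially the same route as the paper's own proof: apply Theorem \ref{3.4} with Gaussian windows to get $\tilde{k}(w,z)=|\la T\pi(z)\f,\pi(w)\f\ra|^2$, invoke Theorem \ref{caraI} (i)$\Rightarrow$(ii) so that $T\in FIO_\G(\chi,N)$, and square the resulting Gabor-matrix estimate. Your caution about the variable bookkeeping is warranted rather than pedantic --- the derivation naturally produces decay in $\la w-\chi(z)\ra$ (equivalently $\la z-\chi^{-1}(w)\ra$ by bi-Lipschitz continuity, not $\la z-\chi(w)\ra$ in general), and the paper's proof performs the same silent swap of $w$ and $z$ without comment, so this is a notational inconsistency in the source, not a gap in your argument.
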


The next Section $2$ contains some preliminaries. Section $3$ is devoted to the proofs of the results exhibited above. In Section $4$ we give some examples which clarify the smoothing effect of the Gabor matrix in \eqref{E4}.

\section{Preliminaries}
\textbf{Notation.} We define $t^2=t\cdot t$,  $t\in\rd$, and, similarly,
$xy=x\cdot y$.  The space   $\sch(\Ren)$ is the Schwartz class and $\sch'(\Ren)$ its dual (the space of temperate distributions).   The brackets  $\la f,g\ra$ means the extension to $\sch' (\Ren)\times\sch (\Ren)$ of the inner product $\la f,g\ra=\int f(t){\overline {g(t)}}dt$ on $L^2(\Ren)$ (conjugate-linear in the second component).

$GL(d,\R)$ denotes the group of real invertible $d\times d$ matrices. 

\subsection{Modulation spaces \cite{Elena-book,F1,B36}} Let us add few details about function spaces. As stated in the Introduction, in our study we shall consider the following weight functions
\begin{equation}\label{vs} v_s(z)=\la z\ra^s=(1+|z|^2)^{\frac s 2},\quad s\in\R.
\end{equation}

The most suitable symbol spaces for time-frequency analysis are the modulation spaces.  Introduced by Feichtinger in the 80's (see the original paper \cite{F1}) and now are well-known in the framework of time-frequency analysis \cite{Elena-book,book}.
Here we limit to the case of weighted modulation spaces with
 weight functions $m$ of at most  polynomial growth, basic examples are \eqref{vs}.  
 
 Let $g$ be a non-zero Schwartz  function. For $1\leq p,q \leq \infty$ and $m$ a weight function as before,   the modulation space $M ^{p,q}_{m}(\R^d)$ is the space of
 distributions $f\in\cS'(\rd)$ such that their STFT $V_gf$ belongs to the
 space $L^{p,q}_{m}(\rdd ) $ with  norm
 \[
 \|f\|_{M ^{p,q}_{m}(\R^d)}:=\|V_gf\|_{L^{p,q}_{m}(\R^{2d})}=\left(\intrd\left(\intrd|V_gf\phas|^p m\phas^p dx\right)^{\frac q p}d\xi\right)^{\frac1q}.
 \]
 This definition  does not depend on the choice of
 the window $g\in \cS (\rd ), g \neq 0$, and different windows yield
 equivalent norms on $\Mmpq$~\cite[Thm.~11.3.7]{book}. 
 
 The symbol spaces we shall  be mainly concerned with are
 $S^s_w=M^{\infty,\infty}_{1\otimes v_s}(\rdd)$ with the  norm
 $$\|\sigma\|_{S^s_w}=\sup_{z\in\rdd}\sup_{\zeta\in\rdd} |V_G\sigma|
 (z,\zeta)|\,\langle \zeta \rangle ^s,
 $$
for a fixed $G\in\cS(\rdd)\setminus\{0\}$.

\subsection{Tame phase functions and related canonical transformations}
\begin{definition}\label{def2.1} Following the notation of \cite{CRGFIO1,CGNRJMPA}, a real  phase function  $\Phi$  is named \emph{tame} if it satisfies the following
	properties:\\
	{\it  A1.} $\Phi\in \cC^{\infty}(\rdd)$;\\
	{\it  A2.} For $z\in\rdd$,
	\begin{equation}\label{phasedecay}
		|\partial_z^\a \Phi(z)|\leq
		C_\a,\quad |\a|\geq
		2;\end{equation}
	{\it  A3.} There exists $\delta>0$:
	\begin{equation}\label{detcond}
		|\det\,\partial^2_{x,\eta} \Phi(x,\eta)|\geq \delta.
	\end{equation}
	\par
	Solving the system
	\begin{equation}\label{cantra} \left\{
		\begin{array}{l}
			y=\Phi_\eta(x,\eta),
			\\
			\xi=\Phi_x(x,\eta), \rule{0mm}{0.55cm}
		\end{array}
		\right.
	\end{equation}
	with respect to $(x,\xi)$, one obtains a
	map $\chi$
	\begin{equation}\label{chi}
		(x,\xi)=\chi(y,\o),
	\end{equation}
	with the following properties:\\
	
	\noindent {\it  B1.} $\chi:\rdd\to\rdd$ is a \emph{symplectomorphism} (smooth, invertible,  and
	preserves the symplectic form in $\rdd$).
	\\
	{\it  B2.} For $z\in\rdd$,
	\begin{equation}\label{chistima}
		|\partial_z^\a \chi(z)|\leq C_\a,\quad\mbox{for} \quad|\a|\geq 1;\end{equation}
	{\it B3}. There exists $\delta>0$: 
	\begin{equation}\label{detcond2}
		|\det\,\frac{\partial x}{\partial y}(y,\eta)|\geq \delta,\quad\,\,
		(x,\xi)=\chi(y,\eta).
	\end{equation}
\end{definition}
Conversely,   to every transformation $\chi$ satisfying the three hypothesis above corresponds a tame phase $\Phi$, uniquely
determined up to a constant \cite{CGNRJMPA}. 
\section{Properties of the Wigner Kernel}
In what follows we showcase the connection between the Wigner kernel of an operator $T$ and its Gabor matrix, namely we shall prove Theorem \ref{3.4} in the Introduction.

\begin{proof}[Proof of Theorem \ref{3.4}]
	In the following computations we will apply the definition of Wigner kernel and the covariance of the Wigner distribution:
	\begin{align}
		|\la T\pi(z)g,\pi(w)\gamma\ra|^2&=\la T\pi(z)g,\pi(w)\gamma\ra\overline{\la T\pi(z)g,\pi(w)\gamma\ra}\\
		&=\la T\pi(z)g\otimes\overline{T\pi(z)g},\pi(w)\gamma\otimes\overline{\pi(w)\gamma}\ra\\
		&=\la W(T\pi(z)g),W(\pi(w)\gamma)\ra\notag\\
		&=\la k, W(\pi(w)\gamma)\otimes W(\pi(z)g)\ra\notag\\
		&=\int_{\bR^{4d}}k(u,v)W(\pi(w)\gamma)(u)W(\pi(z)g)(w)dudv\notag\\
		&=\int_{\bR^{4d}}k(u,v)W\gamma(u-w)Wg(v-z)dzdv\label{E2}\\
		&=[k\ast (W\gamma\otimes Wg)](w,z),\quad\forall z,w\in\rdd.
		\end{align}
	The last equality yields  the claim.
\end{proof}

Next, we prove the relationship between the two classes of FIOs.
\begin{theorem}\label{conn}
	Consider $T\in FIO(\chi,N)$  and windows $g,\gamma\in\cS(\rd)\setminus\{0\}$. Then, for $N>2d$, the associated Gabor matrix of $T$ satisfies the off-diagonal decay estimate: 
	\begin{equation}\label{Gabordecay}
		|\la T\pi(z)g,\pi(w)\gamma\ra|\lesssim \frac{1}{\la w-\chi(z)\ra^{N/2}}, \qquad w,z\in\rdd.
	\end{equation}
\end{theorem}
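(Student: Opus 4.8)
The plan is to combine the pointwise identity from Theorem \ref{3.4} with the Wigner-kernel decay that defines membership in $FIO(\chi,N)$. Starting from \eqref{E3}, I would write
\begin{equation*}
	|\la T\pi(z)g,\pi(w)\gamma\ra|^2=[k\ast (W\gamma\otimes Wg)](w,z)=\int_{\rdd}\int_{\rdd} k(u,v)\, W\gamma(u-w)\, Wg(v-z)\,du\,dv.
\end{equation*}
Since $T\in FIO(\chi,N)$, the kernel obeys $|k(u,v)|\lesssim \la u-\chi(v)\ra^{-N}$. Substituting this bound, the task reduces to estimating
\begin{equation*}
	\int_{\rdd}\int_{\rdd} \frac{|W\gamma(u-w)|\,|Wg(v-z)|}{\la u-\chi(v)\ra^{N}}\,du\,dv \lesssim \frac{1}{\la w-\chi(z)\ra^{N}},
\end{equation*}
after which taking square roots yields the claimed $\la w-\chi(z)\ra^{-N/2}$ decay.

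The key analytic input is the fast (Schwartz) decay of $W\gamma$ and $Wg$, guaranteed because $g,\gamma\in\cS(\rd)$, so that both enjoy arbitrary polynomial decay. First I would change variables to center the integrals, writing $u=w+u'$ and $v=z+v'$, so the denominator becomes $\la w+u'-\chi(z+v')\ra^{-N}$. The strategy is then a standard convolution-type estimate: the peaked, rapidly decaying weights $|W\gamma(u')|$ and $|Wg(v')|$ should effectively localize $u'$ and $v'$ near the origin, allowing one to replace $\la w+u'-\chi(z+v')\ra$ by $\la w-\chi(z)\ra$ up to controllable error.

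The main obstacle is handling the nonlinearity of $\chi$ inside the weight: one must show $\la w+u'-\chi(z+v')\ra^{-N}$ is comparable to $\la w-\chi(z)\ra^{-N}$ after integration against the Schwartz factors. The crucial tool here is property \emph{B2} in Definition \ref{def2.1}, which gives $|\partial_z^\a \chi|\le C_\a$ for $|\a|\ge 1$; in particular $\chi$ is Lipschitz, so $|\chi(z+v')-\chi(z)|\lesssim |v'|$. Combined with the elementary Peetre-type inequality $\la a+b\ra^{-N}\lesssim \la a\ra^{-N}\la b\ra^{N}$, this lets me bound
\begin{equation*}
	\la w+u'-\chi(z+v')\ra^{-N}\lesssim \la w-\chi(z)\ra^{-N}\,\la u'\ra^{N}\,\la v'\ra^{N},
\end{equation*}
using that $|u'-(\chi(z+v')-\chi(z))|\lesssim |u'|+|v'|$. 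I would then pull out the factor $\la w-\chi(z)\ra^{-N}$ and absorb the polynomial factors $\la u'\ra^{N}\la v'\ra^{N}$ against the Schwartz decay of $|W\gamma(u')|$ and $|Wg(v')|$, leaving a finite constant
\begin{equation*}
	\int_{\rdd}\la u'\ra^{N}|W\gamma(u')|\,du'\int_{\rdd}\la v'\ra^{N}|Wg(v')|\,dv'<\infty,
\end{equation*}
which is exactly where the hypothesis $N>2d$ is most transparently needed to keep the reference integrals convergent. Taking square roots gives the desired half-decay $N/2$, independently of the particular windows $g,\gamma$.
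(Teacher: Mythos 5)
Your proof is correct, and it reaches the bound by a genuinely different estimation route than the paper's. Both arguments start from the identity of Theorem \ref{3.4} and insert the kernel bound $|k(u,v)|\lesssim \la u-\chi(v)\ra^{-N}$; but the paper then majorizes $|W\gamma(u-w)|$ and $|Wg(v-z)|$ by $\la u-w\ra^{-N}$ and $\la v-z\ra^{-N}$, invokes the bi-Lipschitz property $|v-z|\asymp|\chi(v)-\chi(z)|$, performs the change of variables $v'=\chi(v)-\chi(z)$ (with Jacobian controlled via \emph{B2}), and applies the weight convolution inequality $\la\cdot\ra^{-N}\ast\la\cdot\ra^{-N}\lesssim\la\cdot\ra^{-N}$, valid for $N>2d$, twice. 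You instead center the variables and use Peetre's inequality together with only the one-sided Lipschitz bound $|\chi(z+v')-\chi(z)|\lesssim|v'|$ from \emph{B2}, pushing all polynomial growth onto the Schwartz factors $W\gamma$, $Wg$. This is arguably cleaner: it needs neither the lower Lipschitz bound (i.e.\ control of $\chi^{-1}$) nor any Jacobian estimate, and it in fact establishes \eqref{Gabordecay} for \emph{every} $N\geq 0$, not just $N>2d$. One remark in your write-up is off, though it does not affect validity: the integrals $\int\la u'\ra^{N}|W\gamma(u')|\,du'$ and $\int\la v'\ra^{N}|Wg(v')|\,dv'$ converge for all $N$, since the Wigner distribution of Schwartz windows is itself Schwartz; so the hypothesis $N>2d$ is not ``transparently needed'' there --- in your argument it is not needed at all. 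It enters the paper's proof precisely through the convolution inequality for polynomial weights, and elsewhere in the theory (e.g.\ the algebra property of $FIO(\chi,N)$), not through convergence of your reference integrals.
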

\begin{proof}
	From \eqref{E2} above we have the equality
	$$|\la T\pi(z)g,\pi(w)\gamma\ra|^2=\int_{\bR^{4d}}k(u,v)W\gamma(u-w)Wg(v-z)dzdv.$$
	 Next, recall the bi-Lipschitz property of $\chi$, which yields the estimate  $|v-z|\asymp |\chi(v)-\chi(z)|$. Furthermore, we will perform the change of variables $\chi(v)-\chi(z)=v'$ so that $dv=|\det J\chi^{-1}(v)| dv'$  and
	$|\det J\chi_1^{-1}(v)|\leq C$ by \eqref{chistima}. Namely,
	\begin{align*}
		|\la T\pi(z)g,\pi(w)\gamma\ra|^2&\leq\int_{\bR^{4d}}|k(u,v)||W\gamma(u-w)||Wg(v-z)|dudv
		\\
		&\lesssim\int_{\bR^{4d}}\frac{1}{\la u-\chi(v)\ra^{N}}\frac{1}{\la u-w \ra^{N}}\frac{1}{\la v-z \ra^{N}}dudv\\
		&=\int_{\bR^{4d}}\frac{1}{\la z+w -\chi(v)\ra^{N}}\frac{1}{\la u \ra^{N}}\frac{1}{\la v-z \ra^{N}}dzdw\\
		&\asymp\int_{\bR^{4d}}\frac{1}{\la u+w -\chi(v)\ra^{N}}\frac{1}{\la u \ra^{N}}\frac{1}{\la \chi(v)-\chi(z) \ra^{N}}dudw\\
		&\leq C\int_{\bR^{4d}}\frac{1}{\la u+w -\chi(z)-v'\ra^{N}}\frac{1}{\la u \ra^{N}}\frac{1}{\la v' \ra^{N}}dudv'\\
	\end{align*}
	Using the weight convolution property  \cite[Lemma 11.1.1]{book}: $$\la\cdot\ra^{-N}\ast\la\cdot\ra^{-N}\leq\la\cdot\ra^{-N}\quad\mbox{for}\quad N>2d,$$
	\begin{align*}
		|\la T\pi(z)g,\pi(w)\gamma\ra|^2&\lesssim\int_{\rdd}\frac{1}{\la u+w -\chi(z)\ra^{N}}\frac{1}{\la u\ra^{N}}dz\\
		&=\int_{\rdd}\frac{1}{\la\chi(z)-w-u\ra^{N}}\frac{1}{\la u\ra^{N}}du\\
		&\lesssim \frac{1}{\la\chi(z)-w\ra^{N}}=
		\frac{1}{\la w-\chi(z)\ra^{N}}
	\end{align*}
	by the even property of $\la\cdot\ra$.
	Taking the square root of the above inequality we obtain \eqref{Gabordecay}.
\end{proof}
\begin{corollary}\label{cor3.3}
	If $T\in FIO(\chi,N)$, $N>2d$, then  $T\in FIO_\G(\chi,N/2)$.
\end{corollary}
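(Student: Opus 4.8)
The plan is to obtain this corollary as an immediate specialization of Theorem \ref{conn}, which has already carried out the analytic core of the argument. The key observation is that the membership condition for $FIO_\G(\chi,s)$ recorded in Definition \ref{defFIO} is nothing but an off-diagonal decay estimate for the Gabor matrix computed with a single fixed window, that is, the diagonal case $\gamma=g$ of the two-window estimate \eqref{Gabordecay}.

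Concretely, I would start from the hypothesis $T\in FIO(\chi,N)$ with $N>2d$, which by Definition \ref{C6T1.1} means the Wigner kernel $k$ of $T$ obeys \eqref{nucleoFIO}. Since $N>2d$, Theorem \ref{conn} applies and yields, for any windows $g,\gamma\in\cS(\rd)\setminus\{0\}$,
$$|\la T\pi(z)g,\pi(w)\gamma\ra|\lesssim \la w-\chi(z)\ra^{-N/2},\qquad w,z\in\rdd.$$
Specializing to $\gamma=g$ produces exactly the inequality \eqref{unobis2a} of Definition \ref{defFIO} with decay exponent $s=N/2$; note that $s=N/2\geq 0$ is an admissible parameter since $N>2d\geq 0$. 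Hence $T\in FIO_\G(\chi,N/2)$, which is the assertion.

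There is no genuine obstacle here: all the work is already packaged inside Theorem \ref{conn}, namely the use of the identity \eqref{E3}, the bi-Lipschitz property of $\chi$ together with the bound $|v-z|\asymp|\chi(v)-\chi(z)|$, and the weight convolution inequality $\la\cdot\ra^{-N}\ast\la\cdot\ra^{-N}\lesssim\la\cdot\ra^{-N}$ valid for $N>2d$. The only points worth a line are that the single-window estimate required by Definition \ref{defFIO} is covered by the two-window estimate at $\gamma=g$, and that the resulting exponent $N/2$ lies in the range $s\geq 0$ demanded by the definition. In this sense the corollary is a purely formal restatement of Theorem \ref{conn}.
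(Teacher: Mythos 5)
Your proposal is correct and matches the paper's own proof, which likewise derives the corollary directly from Theorem \ref{conn} (the estimate \eqref{Gabordecay}) together with Definitions \ref{defFIO} and \ref{C6T1.1}. Your explicit remark that the single-window condition of Definition \ref{defFIO} is the case $\gamma=g$ of the two-window estimate is a minor but accurate elaboration of the same argument.
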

\begin{proof}
	It follows from Definition \ref{defFIO}, Definition \ref{C6T1.1}, and the estimate \eqref{Gabordecay}.
\end{proof}

The result above gives:
$$ FIO(\chi,N)\subset  FIO_\G(\chi,N/2),$$  
which implies for $FIO_\G(\chi,N)$ that half of the  decay is lost. 

The inclusion above allows to prove Proposition \ref{E1}:

\begin{proof}[Proof of Proposition \ref{E1}]
	It follows from Corollary \ref{cor3.3} and the characterization in Theorem \ref{caraI}.
\end{proof}

To ensure a better decay of the Wigner kernel, we undergo it to the smoothing process illustrate in Theorem \ref{partialconverse} in the Introduction.

\begin{proof}[Proof of Theorem \ref{partialconverse}]
	Using formula \eqref{E4} we infer
	$$\tilde{k}(w,z)=|\la T\pi(z)\f,\pi(w)\f\ra|^2.$$
	From Theorem \ref{caraI} and \eqref{unobis} it follows $T\in FIO_\cG(\chi,N)$, hence it satisfies the (continuous) estimate
	$$|\langle T \pi(w) \f,\pi(z)\f\rangle|\leq {C}\langle z-\chi(w)\rangle^{-N},\qquad  w,z\in\rdd.$$
	This implies
	$$ \tilde{k}(w,z)=|\la T\pi(z)\f,\pi(w)\f\ra|^2\lesssim \frac{1}{ \langle z-\chi(w)\rangle^{2N}},\qquad  w,z\in\rdd,$$
	as desired.
\end{proof}

The result in Theorem \ref{partialconverse}  suggests that we could replace the class $FIO(\chi,N)$ with the class $\widetilde{FIO}(\chi,N)$. Namely $T\in \widetilde{FIO}(\chi,N)$ if the smoothed Wigner kernel $\tilde{k}$ satisfies
\begin{equation}
	\tilde{k}(w,z)\lesssim \frac{1}{ \langle z-\chi(w)\rangle^{2N}}.
\end{equation}
We will pursue the study of this new class in a subsequent work.

\section{Examples of smoothing}\label{4}

Let us return to the identity \eqref{E4}:
\begin{equation}\label{E4-1}
	|\la T\pi(z)\f,\pi(w)\f\ra|^2=[k\ast\Phi](w,z), \qquad w,z\in\rdd,
\end{equation}
where
\begin{equation}\label{defPhi}
	\Phi(w,z)=2^de^{-2\pi(w^2+z^2)}.
\end{equation}
In general, the Wigner kernel of a bounded operator $T:\cS(\rd)\to\cS'(\rd)$ is a distribution $k\in\cS'(\bR^{4d})$, whereas the smoothing effect of the convolution in \eqref{E4-1} ensures that the Gabor matrix in the left hand-side of \eqref{E4-1} belongs to $\mathcal{C}^\infty(\bR^{4d})\cap\cS'(\bR^{4d})$. In fact, we may give examples where pointwise estimates have no meaning for $k$, because of its singularities, while the estimates \eqref{unobis2a} for the Gabor matrix are satisfied. Notably, this will show that the inclusion \eqref{inclusion} is strict.

\begin{example}[The Husimi distribution]\label{ex1}
	For the benefit of non-specialists, before discussing Wigner kernels in the following examples, we consider the Husimi distribution, \textcolor{black}{see e.g. \cite{Gosson-Wigner}}:
	\begin{equation}\label{Husimi}
		Wg\ast W\f = |V_\f g|^2, \qquad g\in\cS'(\rd),
	\end{equation}
	where, as in the previous sections, 
	\begin{equation}\label{44}
	\f(t)=e^{-\pi t^2}, \qquad and \qquad W\f(x,\xi)=2^{d/2}e^{-2\pi(x^2+\xi^2)}, \qquad t,x,\xi\in\rd. 
	\end{equation}
	This provides the basic example of smoothing for the Wigner transform. Let us test \eqref{Husimi} on $g=\delta$, the point measure $\la \delta,f\ra = \overline{f(0)}$. An easy calculation shows (see, e.g., \cite[Example 4.3.1]{book}) that
$$W\delta=\delta\otimes 1\in\cS'(\rdd).$$

Hence, $\delta$ has the Wigner time-frequency concentration displayed in Figure $1$.

\begin{figure}\label{figura1}
	\begin{center}
	\end{center}
	\begin{minipage}[h]{0.5\textwidth}
		\vspace{1cm}
		\includegraphics[width=1\textwidth]{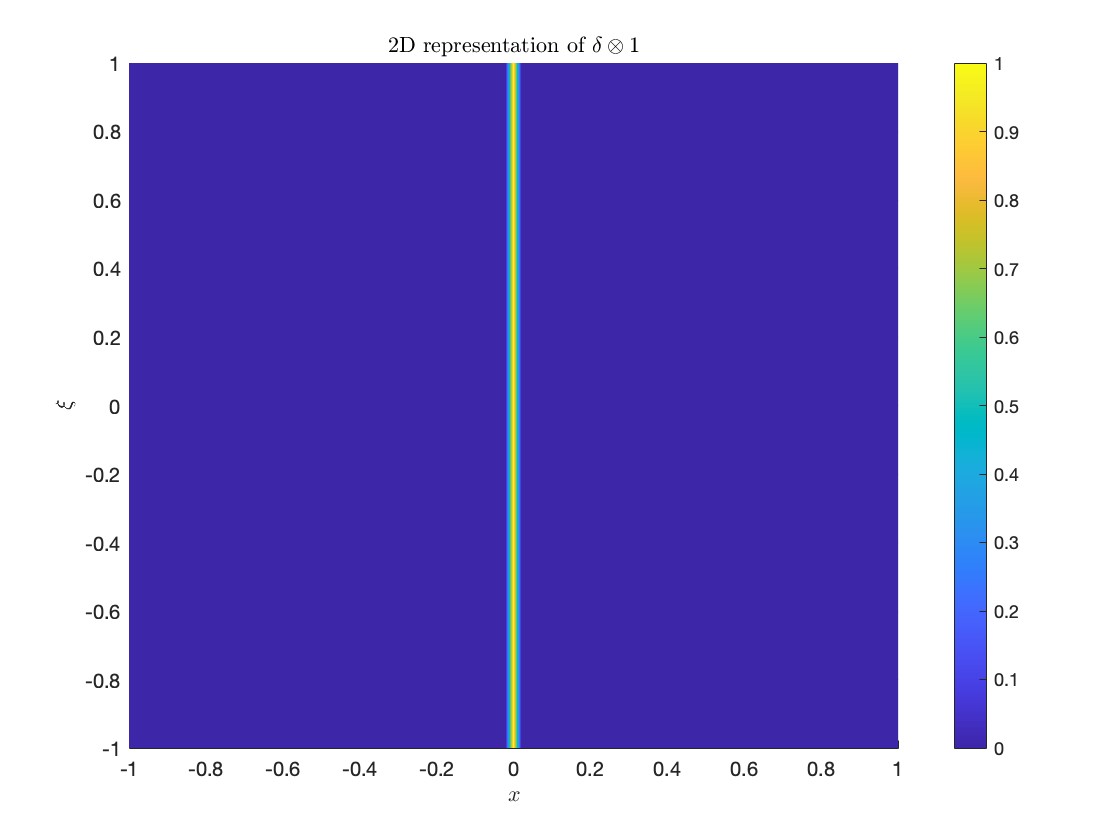}
	\end{minipage}\hfill
	\begin{minipage}[h]{0.5\textwidth}
		\vspace{1cm}\includegraphics[width=1\textwidth]{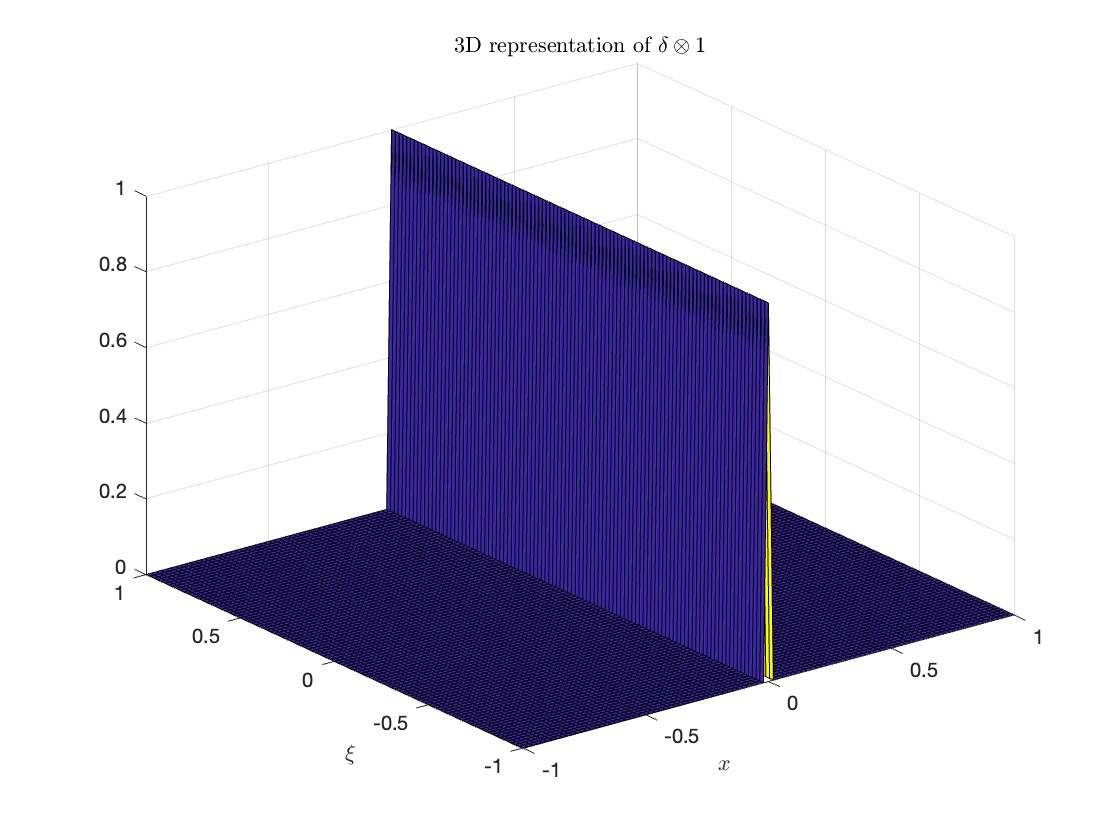}
	\end{minipage}\hfill
	\caption{2D and 3D representations of $\delta\otimes 1$.}
\end{figure}

From \eqref{Husimi},
$$W\delta\ast W\f(x,\xi)=V_\f \delta(x,\xi)\overline{V_\f \delta(x,\xi)},$$
and
$$V_\f \delta(x,\xi) =\la \delta,\pi(x,\xi)\f\ra=\overline{\f(-x)}=\f(x)$$
so that 
\[
	W\delta\ast W\f = (\delta\otimes 1)\ast W\f = 2^{d/2}e^{-2\pi x^2}\int_{\rd}e^{-2\pi\xi^2}d\xi=e^{-2\pi x^2}=|V_\f\delta|^2,
\]
as expected. Figure $2$ displays the time-frequency concentration of $\delta$.

\begin{figure}\label{Figura2}
	\begin{center}
	\end{center}
	\begin{minipage}[h]{0.5\textwidth}
		\vspace{1cm}
		\includegraphics[width=1\textwidth]{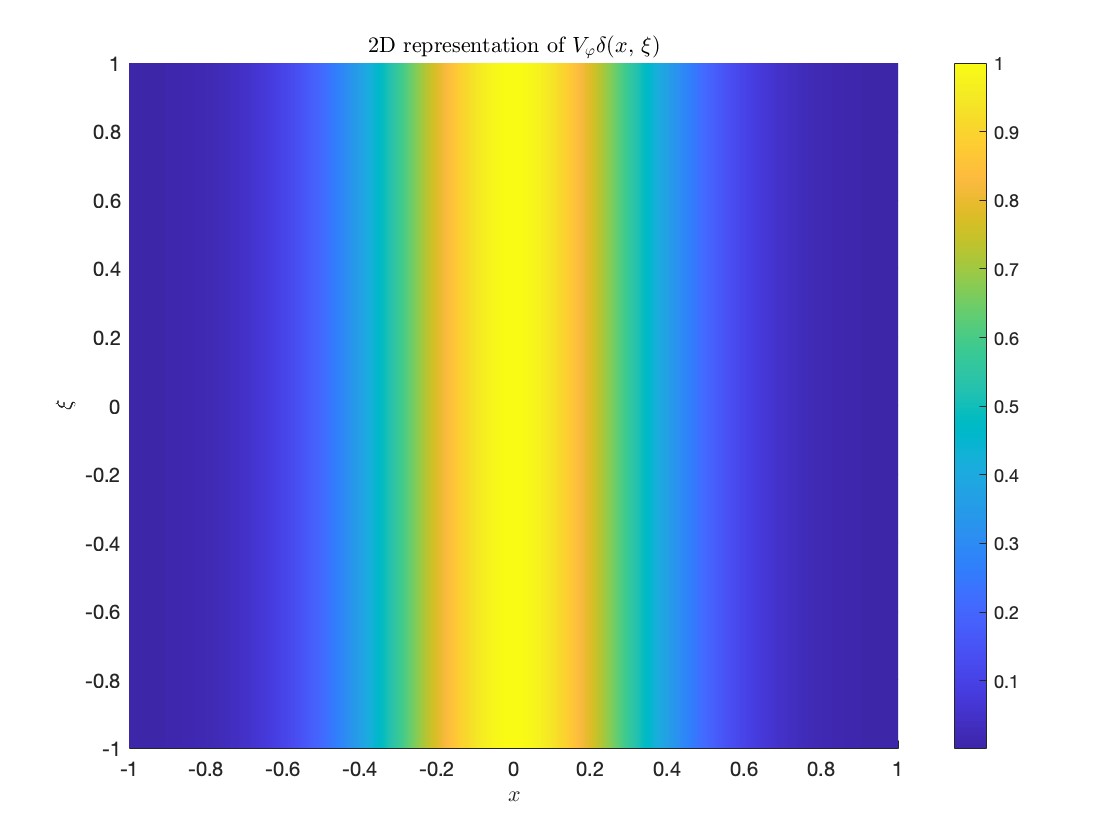}
	\end{minipage}\hfill
	\begin{minipage}[h]{0.5\textwidth}
		\vspace{1cm}\includegraphics[width=1\textwidth]{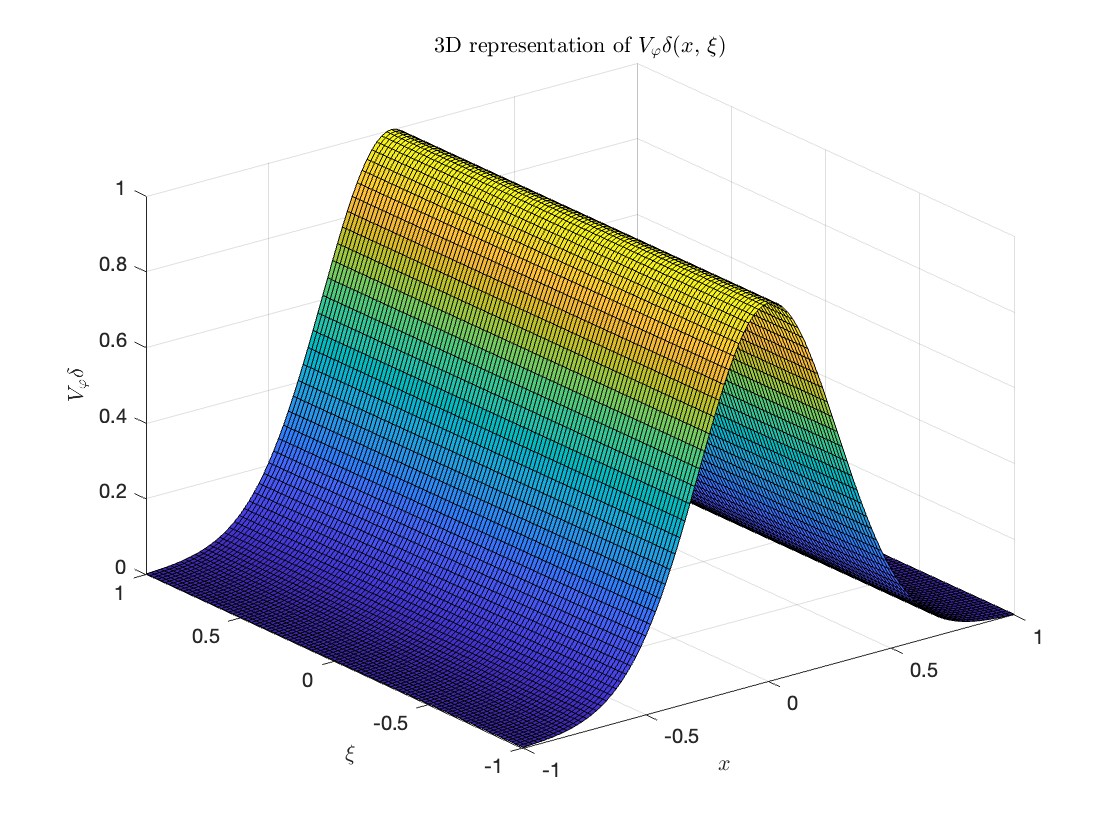}
	\end{minipage}\hfill
	\caption{2D and 3D representations of $V_\f\delta$.}
\end{figure}
\end{example}

\begin{example}[The identity operator]\label{ex2}

Consider $T=I$, the identity operator. Then
$$|\langle I \pi(z) g,\pi(w)g\rangle|=|V_{g} (\pi(z)g)(w)|=|V_{g} g(w-z)|\leq {C}\langle w-z\rangle^{-N},$$
for every $N\geq0$, hence $I\in  FIO_\G(I_{d\times d},N)$ ($I_{d\times d}$ being the identity matrix) for every $N\geq0$. However, from \eqref{I4},
$$W(If,Ig)(z)=W(f,g)(z)=\intrdd T_z\delta(w)W(f,g)(w)dw$$
so that  the Wigner kernel is the distribution $k(z,w)=\delta_{w-z}\in\cS'(\rdd)$ which does not belong to any $FIO(I_{d\times d},N)$. In fact, $\bigcup_\chi FIO(\chi,N)$ is an algebra for $N>2d$, as proved in \cite{CRGVFIO2}, but it is non-unital, whereas $\bigcup_\chi FIO_\cG(\chi,N)$ is an algebra with identity, and Wiener property, see \cite{CGNRJMPA}. To be definite, let us test the validity of \eqref{E4-1} for $g=\f$ as in \eqref{44}. We have,
\[
	V_\f\f(x,\xi)=2^{-d/2}e^{-i\pi x\xi}e^{-\pi(x^2+\xi^2)/2}, \qquad x,\xi\in\rd,
\]
hence:
\[
	|\la I\pi(z)\f,\pi(w)\f\ra|^2=2^{-d}e^{-\pi(z-w)^2}, \qquad z,w\in\rdd.
\]
On the other hand,
\begin{align*}
	[k\ast\Phi(w,z)]&=\delta_{w-z}\ast\Phi \\
	&=\int_{\bR^{4d}}\delta_{\vartheta-\zeta}\Phi(z-\zeta,w-\vartheta)d\theta d\zeta \\
	&= \int_{\rdd}\Phi(z-\vartheta,w-\vartheta)d\theta\\
	&=\int_{\rdd}\Phi(\lambda+\frac{1}{2}(z-w),\lambda-\frac{1}{2}(z-w))d\lambda,
\end{align*}
where we used the change of variables $\vartheta = -\lambda+\frac{1}{2}(z+w)$. Since from \eqref{defPhi}
\[
	\int_{\rdd}\Phi(\lambda+\frac{1}{2}z,\lambda-\frac{1}{2}z)d\lambda = W\Phi(z,0)=2^{-d}e^{-\pi z^2},
\]
we conclude:
\[
	[k\ast \Phi](w,z)=2^{-d}e^{-\pi(z-w)^2},
\]
and \eqref{E4-1} is satisfied. With respect to the off-diagonal variable $t=z-w\in\rdd$, we may note a formal identity with the expressions in Example \ref{ex1}.
\end{example}

\begin{example}[Metaplectic operators]\label{ex3}
	If $\cA$ is a symplectic matrix on $\rdd$, that is $\cA^T J\cA=J$, where
	\[
		J=\begin{pmatrix}
			0_{d\times d} & I_{d\times d}\\
			-I_{d\times d} & 0_{d\times d}
		\end{pmatrix}
	\]
	yields the standard symplectic form on $\rdd$, the metaplectic operator $\hat\cA$ is defined by the intertwining relation
	\[
	\pi(\cA z)=c_\cA\hat\cA\pi(z)\hat\cA^{-1}, \qquad z\in\rdd,
	\]
	with a phase factor $c_\cA\in\bC$, $|c_\cA|=1$. We denote by $Sp(d,\bR)$ the group of $2d\times 2d$ symplectic matrices and we refer to \cite{Gosson-Wigner} and \cite{book} for the theory of metaplectic operators. If $\cA\in Sp(d,\bR)$ and $g\in\cS(\rd)$, for every $N\geq0$ there exists a $C_N>0$ such that:
	\begin{equation}\label{46}
		|\la \hat\cA \pi(z)g,\pi(w)g\ra|\leq C_N\la w-\cA z\ra^{-N}, \qquad z,w\in\rdd,
	\end{equation}
	see for example \cite{CGNRJMP2014}. Hence, $\hat\cA\in FIO_\cG(\cA,N)$ for every $N$. Concerning the Wigner kernel, we note that:
	\[
		W(\hat\cA f)=Wf(\cA^{-1}z)=\int_{\rdd}\delta_{z=\cA w}Wf(w)dw,
	\]
	see for example \cite[Proposition 1.3.7]{Elena-book}, hence the Wigner kernel of $\hat\cA$ is given by:
	\[
		k(z,w)=\delta_{z=\cA w},
	\]
	distribution density in $\cS'(\bR^{4d})$. As in Example \ref{ex2}, pointwise estimates are not possible. By applying the smoothing in \eqref{E4-1}, we may recapture the Gabor matrix and the estimates \eqref{46}.
\end{example}

\begin{example}\label{ex4}
	A pseudodifferential operators $\sigma^w$ is defined as in \eqref{6bis}. Its Wigner kernels is computed in \cite{CRPartI2022,CGRPartII2022} and can be identified with the kernel $k(z,w)$ of a pseudodifferential operator with symbol in some class of distributions on $\bR^{4d}$. By using Theorem \ref{3.4} and, in particular, the smoothing \eqref{E4-1}, we may recapture the Gabor matrix and the results of \cite{charly06,GR}, i.e., $\sigma^w\in FIO_\cG(I_{d\times d},N)$. 
\end{example}

\begin{example}[Generalized metaplectic operators]\label{ex5}
	Generalized metaplectic operators can be defined as products:
	\[
		M_{\sigma,\cA}=\sigma^w\hat\cA,
	\]
	with $\sigma^w$ as in Example \ref{ex4} and $\hat\cA$ as in Example \ref{ex3}. Under suitable assumptions on $\cA$, $M_{\sigma,\cA}$ can be written as a FIO of type I, as in \eqref{sei}, with quadratic phase function and associated linear canonical transformation $\chi=\cA$. The Wigner kernel of $M_{\sigma,\cA}$ is given by $k(z,w)=h(z,\cA w)$, where $h$ is the Wigner kernel of $\sigma^w$. Again, by \eqref{E4-1}, we may recapture the Gabor matrix and prove that $M_{\sigma,\cA}\in FIO_\cG(\cA,N)$. At the moment, the characterization of the Wigner kernel for a FIO of type I with general nonlinear canonical transformation $\chi$ is still an open problem.
\end{example}

\end{document}